\newtheorem{theorem}{Theorem}
\newtheorem{corollary}{Corollary}%
\title{Exploring the Zipf Distribution Through the Lens of Mixtures}
\newif\ifuniqueAffiliation
\author{ \href{https://orcid.org/0000-0003-3675-6902}{\includegraphics[scale=0.06]{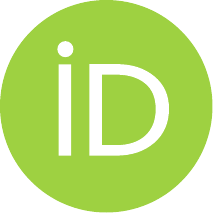}\hspace{1mm}Marta P\'erez-Casany}\thanks{Corresponding author} \\
	Department of Statistics and O. R.\\
	Technical University of Catalonia\\
	\texttt{marta.perez@upc.edu} \\
	\And
	\href{https://orcid.org/0000-0002-7432-0344}{\includegraphics[scale=0.06]{orcid.pdf}\hspace{1mm}Ariel Duarte-L\'opez} \\
	Department of Statistics and O. R.\\
	Technical University of Catalonia\\
	\texttt{ariel.duarte.lopez@upc.edu} \\
    \And
    \href{https://orcid.org/0000-0002-7827-0225}{\includegraphics[scale=0.06]{orcid.pdf}\hspace{1mm}Jordi Valero} \\
	Department of Statistics and O. R.\\
	Technical University of Catalonia\\
	\texttt{jordi.valero@upc.edu} \\
}
\newbox{\orcid}\sbox{\orcid}{\includegraphics[scale=0.06]{orcid.pdf}} 
\author[1]{%
	\href{https://orcid.org/0000-0000-0000-0000}{\usebox{\orcid}\hspace{1mm}David S.~Hippocampus\thanks{\texttt{hippo@cs.cranberry-lemon.edu}}}%
}
\author[1,2]{%
	\href{https://orcid.org/0000-0000-0000-0000}{\usebox{\orcid}\hspace{1mm}Elias D.~Striatum\thanks{\texttt{stariate@ee.mount-sheikh.edu}}}%
}
\affil[1]{Department of Computer Science, Cranberry-Lemon University, Pittsburgh, PA 15213}
\affil[2]{Department of Electrical Engineering, Mount-Sheikh University, Santa Narimana, Levand}
\begin{document}
\maketitle

\begin{abstract}
	The Zipf distribution is a probability distribution widely used by scientists from various disciplines due to its ubiquity. Some of these areas include linguistics, physics, genetics, and sociology, among others. In this paper, it is proved that the Zipf distribution is both a mixture of geometric distributions and a mixture of zero-truncated Poisson distributions. It is also shown that it is not the zero-truncation of a mixed Poisson distribution. These results are important because they provide insights on the data generation mechanism that leads to data from a Zipf distribution. Additionally, it is proved, as a corollary, that the Zipf-Poisson Stopped Sum distribution is a particular case of a mixed Poisson distribution. The results are illustrated analyzing the 135 chapters of the novel Moby Dick.
\end{abstract}

\keywords{Mixed distributions \and Zipf-Poisson Stopped Sum \and Zero-truncated Poisson distribution}

\section{Introduction}
Mixed Poisson (MP) distributions are the natural alternatives to the Poisson distribution when overdispersion is present. This is a consequence of considering that the Poisson parameter instead of being fixed is a random variable (r.v) allows to increase the variance of the final distribution. The Zipf distribution defined in $\{1,2,3,\cdots\}$, also known as discrete Pareto distribution, has proved to be very useful to fit different phenomena especially in the data tail. Some examples are, for instance, the World Wide Web, metabolic or protein  networks, city-size distribution on a country, frequencies of words in a text, etc. It is its simplicity and  ability to describe data from seemingly unrelated situations what increases its use among practitioners. 
In this paper it is proved that the Zipf may be written as a mixture of geometric distributions. As a consequence of this, it is also proved that it is also  a mixture of zero-truncated Poisson distributions (MZTP). In both cases the mixing distribution is analytically specified in the corresponding theorems. One could also think that the Zipf may also be obtained as the zero-truncation of a MP Distribution (ZTMP), and here it is proved  that this is not the case. As a consequence of the results presented, it can be deduced that the Zipf-Poisson Stopped Sum distribution (Zipf-PSS) \cite{duarte2020zipf} is a MP distribution.

The rest of the paper is organized as follows: Section \ref{section_zipf} is devoted to introduce the Zipf distribution, in Section \ref{sec_zipfContrib} the theoretical results are proved. Section \ref{section_poc} is devoted to a proof of concept which analyses the 135 chapters of the novel Moby Dick. Finally, Section \ref{section_conclusions} contains the main conclusions.

\section{The Zipf distribution}
\label{section_zipf}
The Zipf distribution  is a one-parameter distribution defined on the strictly positive integer numbers, where the probabilities change inversely to a power of the values. It is said that a r.v. $X$ follows a Zipf distribution with parameter $\alpha > 1$ if, and only if, its probability mass function (PMF) is equal to:
\begin{align}
\label{eqn_zipf_pmf}
P(X=x) = \frac{x^{-\alpha}}{\zeta(\alpha)},\,\, x = 1,2,...,\,\, \alpha >1,
\end{align}
where $\zeta(\alpha) = \sum_{i=1}^{+\infty} i^{-\alpha}$ is the Riemann Zeta function. Observe that the parameter space of the Zipf distribution is the set of values where the Riemann zeta function converges, which is $(1, +\infty)$.

Since it is markedly a skewed distribution, one may observe in a sample from this model values that sometimes differ by orders of magnitude. It is highly recommended for modeling two types of data: rank and frequencies of frequency data.
For frequencies of frequency data, one understands data that are frequency tables of counts. For instance, assuming that the number of followers that each Instagram account has is known, if we group them by the number of followers, and then we count how many accounts each group has, it gives place to the frequencies of frequency table. 

By taking logarithm in both sides of (\ref{eqn_zipf_pmf}) one has that when the probabilities are  plotted in log-log scale they show a straight line with a slope equal to $-\alpha$ and an intercept equal to $\log(\zeta(\alpha))$. Figure \ref{fig:dzipf} shows the probabilities of the Zipf for different values of the $\alpha$ parameter. On the left-hand side the probabilities are shown in standard scale, and on the right-hand side in log-log scale. Observe that when the $\alpha$ parameter increases, the probabilities concentrates at the low values.

\begin{figure}[!ht] 
\center
\includegraphics[width=.40\linewidth, angle=-90]{./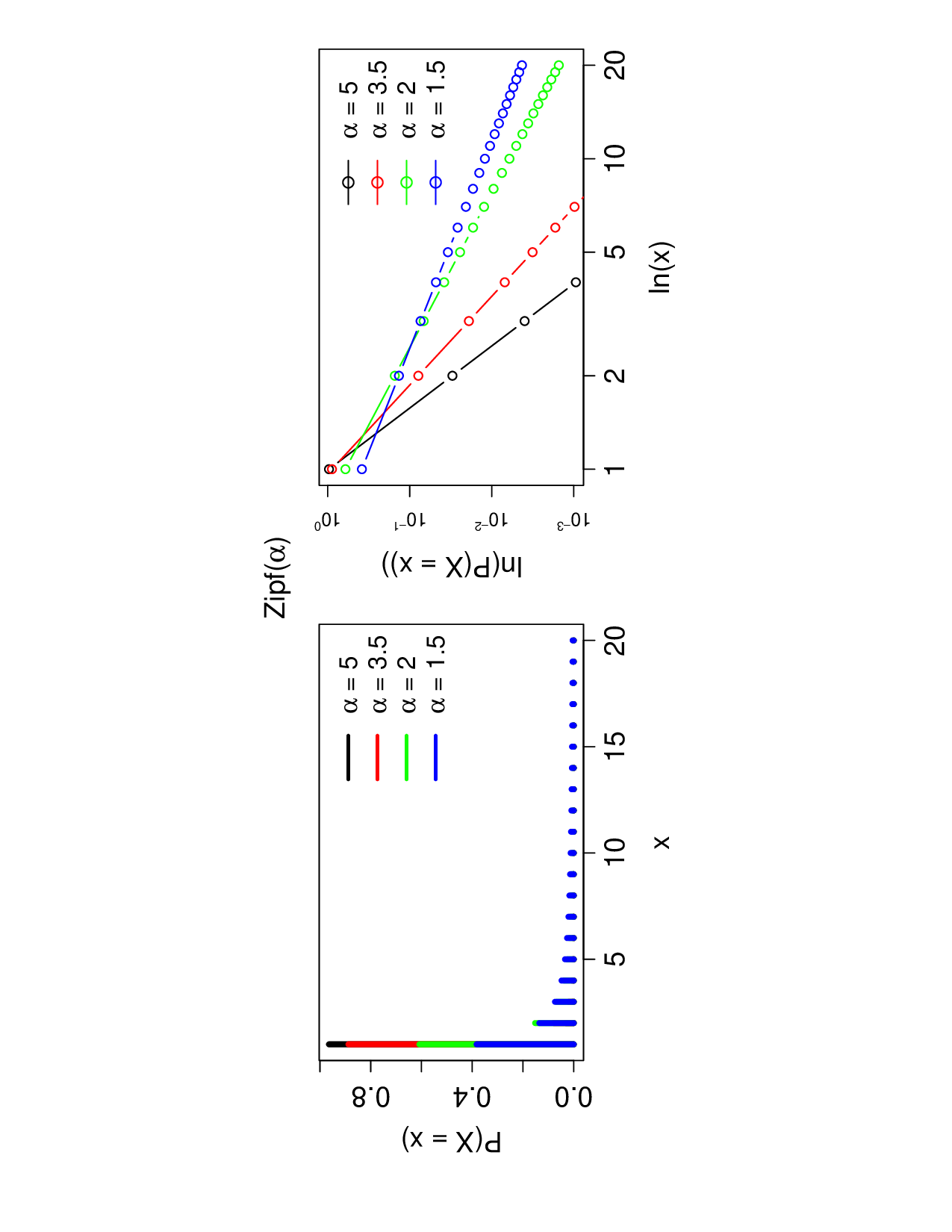} 
\caption{PMFs of the Zipf distribution for $\alpha = 1.5, 2, 3.5$ and $5$. On the left-hand side: normal scale. On the right-hand side: log-log scale.}  
\label{fig:dzipf} 
\end{figure}

The \textit{k-th} moment of the Zipf, $k \in \mathbb{Z}^+$ is equal to:
\begin{equation}
\label{eq:zipfmoment}
E[X^k] = \sum_{x=1} ^{+\infty} \frac{x^k x^{-\alpha}}{\zeta(\alpha)} = \frac{\zeta(\alpha - k)}{\zeta(\alpha)},
\end{equation}
and thus, it is finite if, and only if, $\alpha > k + 1$ because $\zeta(\alpha -k)$ needs to be finite. In particular, the first moment only exists if $\alpha > 2$ and in that case, it is equal to:
\begin{equation}
\label{eq:zipfE}
E[X] =  \frac{\zeta(\alpha - 1)}{\zeta(\alpha)}, \, \alpha > 2.
\end{equation}
\noindent Since the variance depends on the second moment of the distribution, it is finite if, and only if, $\alpha > 3$ and when it exists, it is equal to:
\begin{equation}
\label{eq:zipfVar}
Var[X] = E[X^2] - (E[X])^2 = \frac{\zeta(\alpha - 2) \, \zeta(\alpha) - \zeta(\alpha - 1)^2}{\zeta(\alpha)^2}\,, \alpha > 3.
\end{equation}

The maximum likelihood estimator of the $\alpha$ parameter of the Zipf distribution  may be obtained numerically by solving the following equation:

\begin{align*}
E[\log(X)] = \frac{1}{n}\sum_{i = 1} ^n \log(x_i) = \overline{\log(x)},
\end{align*}
which is equivalent to applying the moment method estimation to the logarithm of the variable.

In \cite{visser2013zipf}, it is proved that the Zipf distribution is the discrete uni-parametric distribution with support on the strictly positive integer values that has maximum Shannon entropy, for a fixed value of $\overline{\log(x)}$. The probability distribution with maximum entropy is assumed to be the best distribution since it is the one maximizing the amount of information or the uncertainty about what is unknown \citep[][p.~1779]{floudas2009pardalos}. 

The probability generating function (PGF) of a Zipf distributed r.v., is equal to:
\begin{equation} \label{eq:pgfzipf}
G_{X}(z)=E(z^X)=\sum_{x=1}^{+\infty} \frac{z^x x^{-\alpha}}{\zeta(\alpha)}=\frac{Li_{\alpha} (z) }{Li_{\alpha}(1)},\, |z| < 1 \text{ and } \alpha >1,  
\end{equation}
where $Li_{\alpha} (z)$ is the \textit{Polylogarithm function} or \textit{Li function of order $\alpha$}, and it is equal to:
\begin{align}
\label{eqn_polylog}
Li_{\alpha} (z)=\sum_{x=1}^{+\infty} \frac{z^x}{x^{\alpha}}.
\end{align}

\noindent Important to observe that $Li_{\alpha}(1) = \zeta(\alpha)$ and thus, the \textit{Li} function may be seen as an extension of the Riemann zeta function. In \cite{valero2022zipf} it is proved that using  analytic prolongation, the $Li$ function is defined throughout the complex plane. The Polylogarithm function may be expressed in terms of the integral Bose-Einstein distribution as follows:

\begin{equation} \label{eq:polylogexpression}
 Li_{\alpha} (z)=\frac{1}{\Gamma (\alpha)} \int_0^{+\infty}   \frac{t^{\alpha-1}}{\frac{exp(t)}{z}-1}\, dt,
\end{equation}
for $Re(\alpha)>0$, and all $z$ except for $z$ that are real and larger or equal to one.

There are numerous examples where researchers have argued for the suitability of the Zipf distribution in modeling various types of data. For instance, \cite{ectors2018exploratory} demonstrated that the Zipf distribution appears in the frequency of daily activities, which can be directly applied to validate travel demand models. Similarly, \cite{wang2017zipf} showed that this distribution can also be used to adjust the vulnerable portion of user-chosen passwords' in cybersecurity. In \cite{chacoma2021word}, the Zipf law is applied to analyze word frequency-rank relationships in English texts based on word class. Additionally, \cite{chen2021exploring} used the Zipf’s exponent to examine the level of urbanization in a large country, while \cite{asif2021statistical} demonstrated the suitability of the Zipf distribution for fitting the upper tail of the Forbes list of world billionaires. In \cite{perc2010zipf} it is shown that, the citation distribution derived from individual publications in Slovenia's current research information system follows a Zipf distribution with an $\alpha$ parameter that usually lies between $2$ and $3$. More recently, research has explored the concept of the Zipf random set (see \cite{lifshits2022two, lifshits2023probabilistic, lifshts2024intersections}).  Additionally, \cite{wang2023zipf} uses Zipf's law to classify words as ``common'' or ``rare'', enabling a similar classification for sentences, which is incorporated into text generations models. This papers also discuses the potential of Zipf's law in the development of AI systems. Generally, the Zipf distribution is considered a plausible probability model in scenarios where small observations occur frequently and large observations are less common. 

For many years, Zipf's law has been considered appropriate for modeling the degree distribution of networks, giving rise to what is known as a ``scale-free'' network. However, this assumption has been widely debated and questioned in recent years, and it has been shown that in many cases, the Zipf's law is only suitable for modeling the data tail. In \cite{duarte2020zipf} and \cite{valero2022zipf}, two different generalizations of Zipf's law are introduced that are able to model degree sequences across its entire range. In \cite{lee2024degree}, a discrete mixture distribution of the Zipf-Polylog and the Generalized Pareto distribution is proposed to address this issue.

\section{The Zipf as a mixture distribution} \label{sec_zipfContrib}
Given a parametric probability distribution, one way to generalize it is by assuming that one of its parameters is a r.v. instead of being constant, which gives place to what is called a \textit{mixture distribution}. The distribution of the parameters is known as ``mixing distribution''. A mixing distribution is required, for instance, to adapt the heterogeneity that usually exists among experimental units. One property of any mixed distribution is that its variance is always larger than the variance of the initial distribution with the same mean. 
See Chapter 8 of \cite{johnson2005univariate} for more information about mixing distributions of discrete r.v.'s. 

Theorem 1 of \cite{hill1975stronger} proves that under certain regularity assumptions, the Zipf distribution is asymptotically the limit of a mixing geometric distribution for different mixing distributions. However, their mixing expression is true only in the tail of the distribution, that is for large values of $x$.

This section proves that the Zipf distribution is: i) a mixture of geometric distributions and ii) a MZTP distribution. In both cases, the mixing distribution is specified analytically. As a consequence of being a MZTP distribution, its variance is larger than the one of a zero-truncated Poisson distribution with the same mean. Important to note that our results are not asymptotic, and thus do not require a large value of $x$. The proofs of Theorems 1 and 2 are based on the PGF of the associated distributions, as working with the PDF has been instrumental in deriving the analytical expressions for the mixing distributions. However in Appendix \ref{app_proof}, Theorem \ref{prop:teo-geozipf} and point a) of Theorem \ref{prop:teorema2} are proven using the PMF, establishing an alternative validation.

\noindent Let us  note that if $N^{zt}$ denotes the zero-truncated version of a r.v. $N$ with a Poisson$(\lambda)$ distribution, given that the PGF of the Poisson$(\lambda)$ is equal to $h_N(z)=e^{\lambda (z-1)}$, one has that,  
\begin{equation} \label{eq:pgfPositivePoisson}
h_{N^{zt}} (z;\lambda)= \frac{e^{\lambda z}-1}{e^{\lambda}-1}.
\end{equation} 

As a consequence of the fact that $\lim_{\lambda \to 0} h_{N^{zt}} (z;\lambda)=z$, it is possible to consider $[0,+\infty)$ as the parameter space of the $N^{zt}$ distribution, where $\lambda=0$ corresponds to the degenerate distribution at one. 

\noindent In Theorem 3 of \cite{valero2022zipf} it is proved that the geometric distribution with parameter $p\in(0,1)$ and domain $\{1,2,\cdots\}$ is an MZTP distribution with mixing distribution:
\begin{equation}\label{eq:mixinggeo}
	f(\lambda;p)=\frac{p}{(1-p)^2} e^{-\lambda/(1-p)} (e^{\lambda}-1),\,\,\, \lambda\in (0,+\infty).
\end{equation}
Thus, taking into account that the PGF of a mixture distribution is the integral of the PGF of mixed distribution multiplied by the probability density function of the mixing distribution, by (\ref{eq:pgfPositivePoisson}) one has that:

\begin{equation} \label{eq:pgfgeom-mixtura}
\frac{ p z}{1-(1-p) z}=\int_0^{+\infty} \frac{e^{\lambda z}-1}{e^{\lambda}-1} f(\lambda; p)\, d \lambda.    
\end{equation}

Next two theorems prove that the Zipf distribution is a mixture of geometric distributions (Theorem \ref{prop:teo-geozipf}) as well as a MZTP distribution (Theorem \ref{prop:teorema2}).

\begin{theorem}
\label{prop:teo-geozipf}
The Zipf($\alpha$) distribution is a mixture of  geometric distributions with domain $\{1,2,3,\cdots\}$ and parameter $s = -log(1-p)$, with mixing distribution:
\begin{equation} \label{eq:mixinggeozipf}
f(s;\alpha)=\frac{s^{\alpha-1}}{(e^{s}-1) \zeta(\alpha) \Gamma(\alpha)},\,\,\,  s>0, \textrm{and}\,\, \alpha>1.
\end{equation}    
\end{theorem}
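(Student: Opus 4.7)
The strategy I would follow is the PGF approach suggested by the authors: reparametrize the geometric by $s=-\log(1-p)$, verify that the proposed mixing density is indeed a probability density, and then check that the mixture PGF reproduces the Zipf PGF $Li_{\alpha}(z)/\zeta(\alpha)$. Since PGFs uniquely determine distributions on $\{1,2,\ldots\}$, this will establish the claim.

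\textbf{Step 1 (reparametrization).} Under $s=-\log(1-p)$, i.e., $1-p=e^{-s}$, the geometric PGF $pz/(1-(1-p)z)$ becomes $(1-e^{-s})z/(1-e^{-s}z)$, valid for $s\in(0,+\infty)$ and $|z|<1$.

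\textbf{Step 2 (the mixing density integrates to one).} Using the classical Bose--Einstein/Riemann integral representation
\begin{equation*}
\int_0^{+\infty} \frac{s^{\alpha-1}}{e^s-1}\,ds \;=\; \Gamma(\alpha)\,\zeta(\alpha), \qquad \alpha>1,
\end{equation*}
(which is also the $z=1$ limit of (\ref{eq:polylogexpression})), one immediately gets $\int_0^{+\infty} f(s;\alpha)\,ds = 1$, so $f(\cdot;\alpha)$ is a valid density on $(0,+\infty)$.

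\textbf{Step 3 (PGF of the mixture).} I would compute
\begin{equation*}
G(z) \;=\; \int_0^{+\infty} \frac{(1-e^{-s})z}{1-e^{-s}z}\cdot\frac{s^{\alpha-1}}{(e^{s}-1)\,\zeta(\alpha)\,\Gamma(\alpha)}\,ds,
\end{equation*}
and use the algebraic simplification $(1-e^{-s})/(e^s-1)=e^{-s}$ to rewrite
\begin{equation*}
G(z) \;=\; \frac{1}{\zeta(\alpha)\,\Gamma(\alpha)} \int_0^{+\infty} \frac{z\,e^{-s}}{1-z\,e^{-s}}\; s^{\alpha-1}\,ds.
\end{equation*}
For $|z|<1$ and $s>0$ one has $|z e^{-s}|<1$, so the integrand expands as the absolutely convergent geometric series $\sum_{x\ge 1} z^x e^{-sx}$. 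Interchanging sum and integral (justified by Tonelli on $|z|<1$) and using $\int_0^{+\infty} s^{\alpha-1} e^{-sx}\,ds=\Gamma(\alpha)/x^{\alpha}$ yields
\begin{equation*}
G(z) \;=\; \frac{1}{\zeta(\alpha)\,\Gamma(\alpha)}\sum_{x=1}^{+\infty} z^x \cdot \frac{\Gamma(\alpha)}{x^\alpha} \;=\; \frac{Li_\alpha(z)}{\zeta(\alpha)},
\end{equation*}
which by (\ref{eq:pgfzipf}) is precisely the Zipf$(\alpha)$ PGF.

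\textbf{Step 4 (conclusion).} Since the PGF of the mixture coincides with that of the Zipf on its disk of convergence, and the PGF determines the distribution on $\mathbb{N}$, the Zipf$(\alpha)$ distribution equals the stated mixture of reparametrized geometrics. The only mildly delicate point is the Fubini/Tonelli justification for swapping the sum and the integral, but the pointwise bound $|z e^{-s}|<1$ for $|z|<1$ and the finiteness of the Mellin-transform integrals $\int_0^{\infty}s^{\alpha-1}e^{-sx}\,ds$ make this routine; no other step presents a real obstacle, since the remarkable cancellation $(1-e^{-s})/(e^s-1)=e^{-s}$ does most of the work.
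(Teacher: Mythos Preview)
Your proof is correct and follows essentially the same PGF route as the paper's main proof: reparametrize the geometric by $s$, cancel the $(e^s-1)$ factor against the mixing density, and show the remaining integral equals $Li_\alpha(z)/\zeta(\alpha)$. The only cosmetic difference is in that last identification---the paper quotes the Bose--Einstein integral representation (\ref{eq:polylogexpression}) of $Li_\alpha$ directly, whereas you expand $ze^{-s}/(1-ze^{-s})$ as a geometric series and integrate term by term (effectively re-deriving (\ref{eq:polylogexpression})); you also make explicit the check that $f(\cdot;\alpha)$ integrates to one, which the paper leaves tacit.
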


\begin{proof}
The PGF of the geometric distribution defined in strictly positive integers, as a function of the parameter $s$ is equal to:
\begin{align*}
\frac{pz}{1-qz} = \frac{(1-e^{-s})z}{1-e^{-s}z} = \frac{(e^s -1)z}{e^s -z}.
\end{align*}

Observe that, with the new parametrization, the parameter space is $(0, +\infty)$ instead of $(0, 1)$. Thus, taking into account the integral expression of the $Li$ function (\ref{eq:polylogexpression}), its mixture distribution with mixing distribution as defined in (\ref{eq:mixinggeozipf}) is equal to:
\begin{align}
\label{eq:formula1}
&\int_0^{+\infty} \frac{(e^{s}-1) z}{e^{s}-z} \frac{s^{\alpha-1}}{(e^{s}-1) \zeta(\alpha) \Gamma(\alpha)} ds =  \frac{z}{\zeta(\alpha) \Gamma(\alpha)} \int_0^{+\infty} \frac{s^{\alpha-1}}{e^{s}-z}ds \nonumber \\[5 pt]
&= \frac{1}{\zeta(\alpha)} \int_0^{+\infty} \frac{s^{\alpha - 1}}{\frac{e^{s}}{z} -1}ds
= \frac{Li_{\alpha} (z)}{Li_{\alpha}(1)}, 
\end{align}
\noindent which, by  (\ref{eq:pgfzipf}), is the PGF of the Zipf($\alpha$) distribution.
\end{proof}

Figure \ref{fig:mixingDist} contains the plot of the mixing distribution of Theorem \ref{prop:teo-geozipf}, as a function of $s$ (on the left-hand side) and as a function of $p$ (on the right-hand side), for different $\alpha$ values.

\begin{figure}[!ht] 
\center
\includegraphics[width=.40\linewidth, angle=-90]{./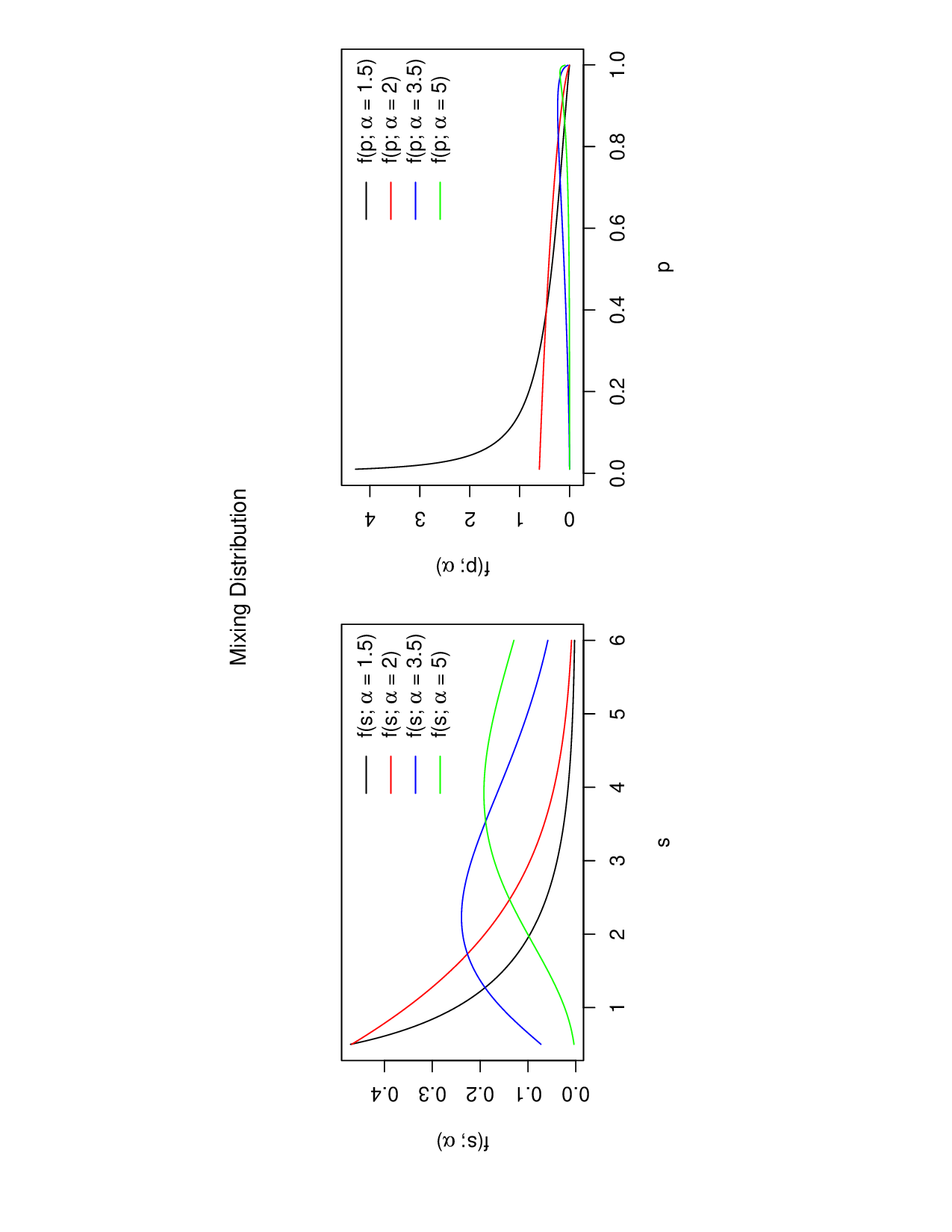} 
\caption{The mixing distribution of Theorem \ref{prop:teo-geozipf}, as a function of $s$ (on the left-hand side) and as a function of $p$ (on the right-hand side), for different $\alpha$ values.}  
\label{fig:mixingDist} 
\end{figure}

Theorem 1 of \cite{valero2010zero} characterizes the families of distributions with finite mean that are ZTMP,  based on their PGF. The theorem states that a PGF $h(z)$ is the PGF of a ZTMP distribution if and only if it verifies that:
\begin{itemize}
	\item[(a)] $h(0)$=0, $h(1)=1$ and $h'(1)<+\infty$;
	\item[(b)] it is analytical in $(-\infty,1)$;
	\item[(c)] all the coefficients of the series expansion of $h(z)$ around any point $z_0 \in (-\infty,1)$ are strictly positive, except for the constant term that may be negative or zero; and
	\item[(d)] $\lim_{z \to -\infty} h(z)=-L$, with $L$ being a finite strictly positive number.
\end{itemize}
Theorem 2 of the same paper establishes that the PGFs of MZTP distributions need to verify the first three conditions of Theorem 1, but not the last one. As a consequence, any ZTMP distribution is an MZTP distribution, but not the other way around. The characterizations are also true if the distribution has no finite mean. The next theorem establishes that the Zipf belongs to the MZTP class, but not to the ZTMP class. 

\begin{theorem} \label{prop:teorema2}
The Zipf($\alpha$) distribution verifies that: 
	\begin{itemize}
		\item[a)] it is an MZTP distribution with mixing distribution equal to:
		\begin{equation} \label{eq:mixingzipf}
		f(\lambda; \alpha)=   \frac{(e^{\lambda}-1) \int_0^{+\infty}   e^{s-\lambda e^{s}}  s^{\alpha-1} ds }{\Gamma(\alpha) \zeta(\alpha) },\,\, \lambda>0,
		\end{equation}
		\item[b)] it is not a ZTMP distribution.
	\end{itemize} 
\end{theorem}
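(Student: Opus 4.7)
The strategy is to stack Theorem~\ref{prop:teo-geozipf} (Zipf as a mixture of geometrics) on top of the geometric-as-MZTP identity~(\ref{eq:pgfgeom-mixtura}) and then swap the order of integration. Setting $p=1-e^{-s}$, so that $1-p=e^{-s}$, rewrites the geometric mixing density~(\ref{eq:mixinggeo}) as $e^{s}(e^{s}-1)\,e^{-\lambda e^{s}}(e^{\lambda}-1)$. Inserting this into~(\ref{eq:pgfgeom-mixtura}) and substituting the resulting expression for the geometric PGF inside the integral appearing in Theorem~\ref{prop:teo-geozipf} yields a non-negative double integrand in $(s,\lambda)$ for $z\in(0,1)$, so Tonelli's theorem permits the swap. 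The factor $(e^{s}-1)$ in the denominator of the Zipf mixing density cancels the $(e^{s}-1)$ coming from the geometric mixing density, reducing the inner $s$-integral to $\int_{0}^{+\infty} e^{s-\lambda e^{s}} s^{\alpha-1}\,ds$, and the whole expression collapses to $\int_{0}^{+\infty}\bigl[(e^{\lambda z}-1)/(e^{\lambda}-1)\bigr]\,f(\lambda;\alpha)\,d\lambda$ with $f(\lambda;\alpha)$ exactly as in~(\ref{eq:mixingzipf}). Non-negativity of~(\ref{eq:mixingzipf}) is visible termwise, and normalization $\int_{0}^{+\infty} f(\lambda;\alpha)\,d\lambda=1$ is forced by evaluating the resulting PGF identity at $z=1$.

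\textbf{Plan for part (b).} I would invoke Theorem~1 of~\cite{valero2010zero}, whose condition~(d) requires every ZTMP PGF to satisfy $\lim_{z\to-\infty} h(z)=-L$ for some finite $L>0$. The Zipf PGF being $h(z)=Li_\alpha(z)/\zeta(\alpha)$, it suffices to show $|Li_\alpha(-x)|\to +\infty$ as $x\to+\infty$. Evaluating the integral representation~(\ref{eq:polylogexpression}), valid on $\mathbb{C}\setminus[1,+\infty)$, at $z=-x$ gives
\[
Li_\alpha(-x) \;=\; -\frac{1}{\Gamma(\alpha)}\int_{0}^{+\infty} \frac{t^{\alpha-1}}{e^{t}/x + 1}\,dt.
\]
Restricting the integrand to $t\in[0,\ln x]$, where $e^{t}/x+1\leq 2$, produces the lower bound $|Li_\alpha(-x)|\geq (\ln x)^{\alpha}/(2\alpha\,\Gamma(\alpha))$, which diverges to $+\infty$. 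Hence $h(z)\to-\infty$ rather than to a finite negative limit, condition~(d) fails, and the Zipf is not a ZTMP distribution.

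\textbf{Main obstacle.} Part (a) is essentially bookkeeping once Theorem~\ref{prop:teo-geozipf} and~(\ref{eq:pgfgeom-mixtura}) are available; the only real check is Tonelli's applicability, which is immediate from positivity on $z\in(0,1)$. The subtler work is in part (b), because the negative real axis lies outside the domain of convergence of the defining series of $Li_\alpha$, so one must reason through an analytic continuation. Fortunately~(\ref{eq:polylogexpression}) supplies such a continuation on $\mathbb{C}\setminus[1,+\infty)$ and makes the growth estimate $|Li_\alpha(-x)|\gtrsim(\ln x)^{\alpha}$ transparent; this logarithmic-polynomial blow-up is what prevents Zipf from being a zero-truncated mixed Poisson.
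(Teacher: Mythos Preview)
Your proposal is correct and follows essentially the same route as the paper: for part~(a) you compose Theorem~\ref{prop:teo-geozipf} with the geometric--MZTP identity~(\ref{eq:pgfgeom-mixtura}) (reparametrized via $p=1-e^{-s}$) and swap the order of integration, exactly as the paper does, while for part~(b) you invoke condition~(d) of Theorem~1 in~\cite{valero2010zero} and use the integral representation~(\ref{eq:polylogexpression}) to show the PGF diverges to $-\infty$ along the negative real axis. Your version is in fact slightly more careful than the paper's, since you explicitly cite Tonelli for the Fubini step and supply the quantitative lower bound $|Li_\alpha(-x)|\geq (\ln x)^{\alpha}/(2\alpha\Gamma(\alpha))$, whereas the paper simply passes to the limit inside the integral without comment.
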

\begin{proof}

Taking into account (\ref{eq:pgfzipf}), proving a) is equivalent to seeing that
$$
\frac{Li_{\alpha} (z)}{Li_{\alpha} (1)} = \int_0^{+\infty} \frac{e^{\lambda z}-1}{e^{\lambda}-1} \, f(\lambda; \alpha)\, d \lambda,
$$ 
with $f(\lambda; \alpha)$ defined as in (\ref{eq:mixingzipf}). First observe that, as a consequence of Theorem \ref{prop:teo-geozipf}, from (\ref{eq:formula1}) we have:
\begin{equation} \label{eq:lio1}
\frac{Li_{\alpha} (z)}{Li_{\alpha} (1)} = \int_0^{+\infty}     \frac{(e^{s}-1) z}{e^{s}-z}    \frac{s^{\alpha-1}}{(e^{s}-1) \zeta(\alpha)\Gamma(\alpha)} ds.
\end{equation}
Now, rewriting (\ref{eq:pgfgeom-mixtura}) with the $s$ parametrization we also have:
\begin{equation} \label{eq:lio2}
\frac{ (e^{s}-1) z}{e^{s}-z}= \int_0^{+\infty} \frac{ e^{\lambda z}-1}{e^{\lambda}-1} f^*(\lambda;s) d \lambda,
\end{equation}
where $$f^*(\lambda;s)=f(\lambda;1-e^{-s})=e^{s} (e^{s}-1) e^{-\lambda e^{s}} (e^{\lambda}-1).$$
Substituting (\ref{eq:lio2}) in (\ref{eq:lio1}) gives that:
\begin{eqnarray} 
	\frac{Li_{\alpha} (z)}{Li_{\alpha} (1)} & = & \int_0^{+\infty}  \Big[  \int_0^{+\infty} \frac{ e^{\lambda z}-1}{e^{\lambda}-1} f^*(\lambda;s)\, d \lambda \Big]  \frac{s^{\alpha-1}}{(e^{s}-1) \zeta(\alpha) \Gamma(\alpha)} d s \nonumber \\[5 pt] \nonumber & = &  
	\int_0^{+\infty}   \frac{ e^{\lambda z}-1}{e^{\lambda}-1} \int_0^{+\infty}  \Big[  f^*(\lambda; s) \frac{s^{\alpha-1}}{(e^{s}-1)} \frac{1}{\zeta(\alpha) \Gamma(\alpha)}     \Big]  ds\, d\lambda \noindent \\[5 pt] \nonumber & = &   	\int_0^{+\infty}   \frac{e^{\lambda z}-1}{e^{\lambda}-1}  \Big[ \frac{e^{\lambda}-1}{\zeta(\alpha) \Gamma(\alpha)}    \int_0^{+\infty} e^{s-\lambda e^{s}} s^{\alpha-1}\, ds \Big]\,d\lambda,
\end{eqnarray}
which proves $a)$. To prove $b)$, it is necessary to see that condition d) of Theorem 1 of \cite{valero2010zero} is not satisfied. But, this is the case since:
\begin{eqnarray}
\lim_{z \to -\infty} \frac{Li_{\alpha} (z)}{Li_{\alpha} (1)} & = &\lim_{z \to -\infty} \frac{z}{\Gamma(\alpha) Li_{\alpha} (1)} \int_0^{+\infty} \frac{t^{\alpha-1}}{e^{t}-z} \, dt \nonumber \\   & = &  \lim_{z \to -\infty} \frac{1}{\Gamma(\alpha) \zeta(\alpha)} \int_0^{+\infty} \frac{ t^{\alpha-1}  }{ \frac{e^{t}}{z}-1}  =-\infty.
\end{eqnarray}
\end{proof}
Figure \ref{fig:mixingDist} contains the plots of the mixing distribution of Theorem \ref{prop:teorema2} as a function of $\lambda$ for $\alpha$ equal to $1.1, 1.5, 2, 3.5$ and $5$. As it can be seen, as $\alpha$ increases the density function tends to have less slope. When the value of $\alpha$ is small, the probability of small $\lambda$'s increase significantly.

The interpretation of Theorem \ref{prop:teorema2} is as follows: if we take an MP distribution and we truncate it at zero, in order that it takes values from one to infinity,  we will never obtain the Zipf distribution. However, if we first truncate at zero a Poisson distribution, and then the parameter of the zero-truncated Poisson is considered to follow the probability distribution defined at (\ref{eq:mixingzipf}), the result is the Zipf distribution. 

\begin{figure}[!ht] 
\center
\includegraphics[width=.40\linewidth, angle=-90]{./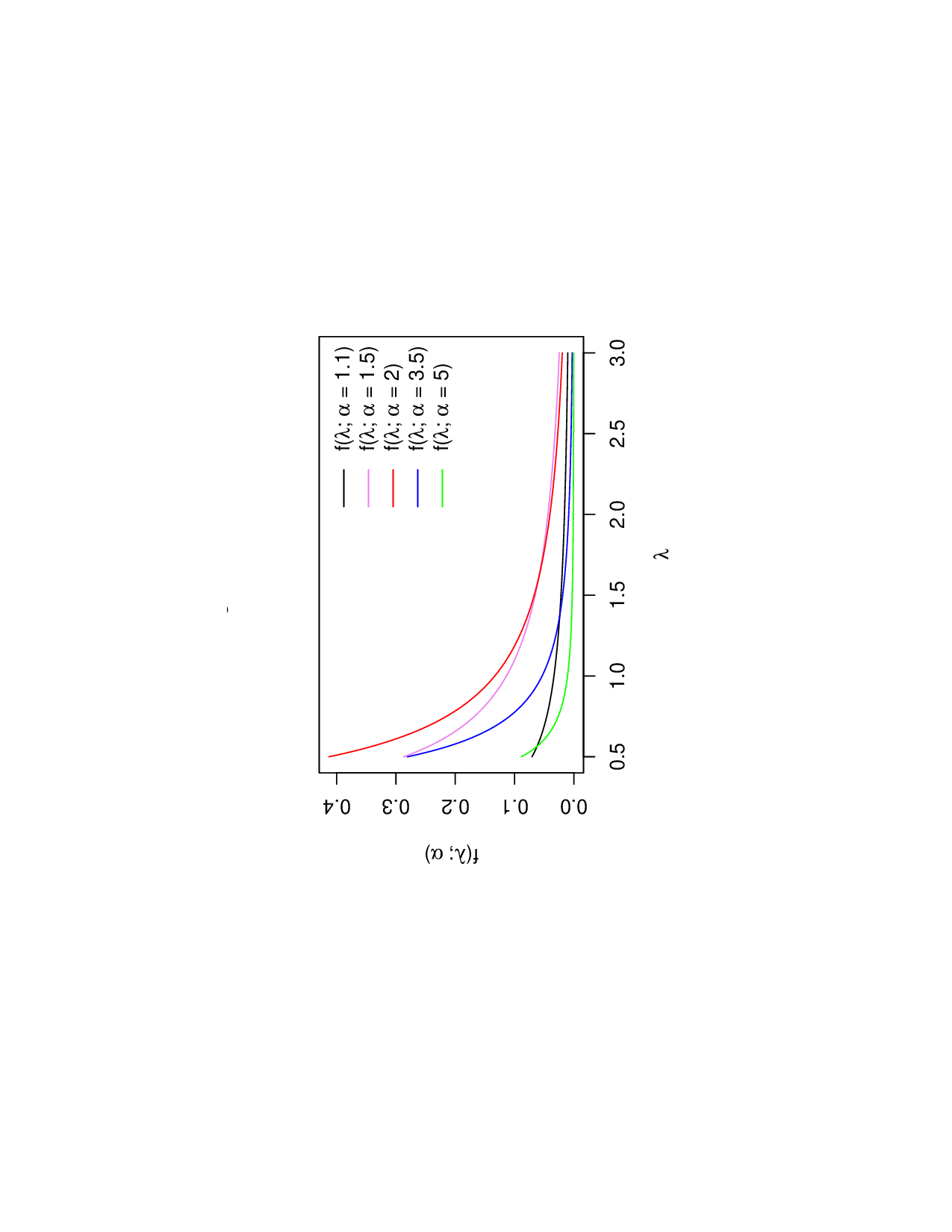} 
\caption{The mixing distribution of Theorem \ref{prop:teorema2} a), as a function of $\lambda$ for $\alpha=$ 1.1, 1.5, 2, 3.5 and 5.}  
\label{fig:mixingDist} 
\end{figure}

In \cite{duarte2020zipf} the Zipf-PSS model is introduced as the Poisson Stopped sum (PSS) version of the Zipf distribution. As a consequence of Theorem \ref{prop:teorema2}  it is possible to prove that the Zipf-PSS is a MP which is done in the following corollary. To that end, one has to consider Theorem 3 of \cite{valero2013poisson}, that proves that  a non-negative integer PSS distribution with finite mean is a MP if, and only if, the zero-truncation of its secondary distribution is an MZTP distribution. As a consequence of this theorem one has the following result:

\begin{corollary}
The Zipf-PSS($\alpha,\lambda$) distribution is a MP for any  $\alpha>2$ and $\lambda>0$.
\end{corollary}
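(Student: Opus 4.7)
The plan is to apply the cited characterization (Theorem 3 of \cite{valero2013poisson}) directly: a non-negative integer PSS distribution with finite mean is a MP if and only if the zero-truncation of its secondary distribution is an MZTP. In the Zipf-PSS$(\alpha,\lambda)$ model, the secondary distribution is the Zipf$(\alpha)$, so the strategy reduces to verifying the two hypotheses of that characterization and invoking Theorem \ref{prop:teorema2}.

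First I would observe that the Zipf$(\alpha)$ is supported on $\{1,2,\ldots\}$, so $P(X=0)=0$ and its zero-truncation coincides with itself. Consequently, Theorem \ref{prop:teorema2} a) yields immediately that the zero-truncated secondary distribution is an MZTP, with mixing density (\ref{eq:mixingzipf}). This handles the qualitative condition of the characterization.

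Next I would check the quantitative condition, namely that the Zipf-PSS$(\alpha,\lambda)$ has finite mean. Because the mean of a PSS distribution equals the product of the primary Poisson mean $\lambda$ and the mean of the secondary distribution, and because by (\ref{eq:zipfE}) the Zipf has finite mean precisely when $\alpha>2$, finiteness of the Zipf-PSS mean holds for all $\alpha>2$ and $\lambda>0$. With both hypotheses in place, a single citation of Theorem 3 of \cite{valero2013poisson} closes the proof.

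The only potential subtlety, which I would flag briefly, is making sure the characterization is being applied to the correct object: the secondary distribution in a Zipf-PSS is the Zipf itself, not a zero-inflated or shifted version, so no preliminary manipulation is needed before invoking Theorem \ref{prop:teorema2}. Apart from this bookkeeping point, the argument is essentially a one-line consequence of the preceding theorem, so there is no substantive obstacle to overcome.
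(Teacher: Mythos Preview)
Your proposal is correct and follows essentially the same approach as the paper: verify finite mean for $\alpha>2$, note that the Zipf secondary distribution is its own zero-truncation, invoke Theorem~\ref{prop:teorema2} a) to get the MZTP property, and then apply Theorem~3 of \cite{valero2013poisson}. The only difference is that you spell out why the mean is finite via the PSS mean formula, which is a harmless elaboration.
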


\begin{proof}
Given that $\alpha>2$, the Zipf-PSS($\alpha,\lambda$)  has a finite mean. Moreover, by definition it is a PSS with a Zipf($\alpha$) as a secondary distribution. Given that the Zipf($\alpha$) distribution does not contain the zero value in its support, it is equal to its zero-truncation version, and it is a Mixture of Zero-truncated Poisson distributions as a consequence of Theorem \ref{prop:teorema2}.
\end{proof}

\section{Illustrative Example} \label{section_poc}
This section presents the results of an example that illustrates the veracity of the theoretical result that appears in Theorem \ref{prop:teorema2}. The result presented in Theorem \ref{prop:teo-geozipf} could be checked similarly. The study consist on analyzing each of the 135 chapters of Herman Melville's novel \textit{Moby Dick}. During the recent years, this literary classic has been examined from various perspectives by linguists and statisticians, see, for instance, the works of \cite{espitia2020universal},  \cite{luo2024segmental} and \cite{seibel2024whaleness}.

To begin the analysis,  the novel has been downloaded from Project Gutenberg\footnote{\url{https://www.gutenberg.org}}, a digital library offering free access to thousands of books, most of which are classic literary works. The book was retrieved in HTML format from \url{https://www.gutenberg.org/files/2701/2701-h/2701-h.htm}. Then, the content has been parsed  to separate the 135 chapters, which are analyzed in this section.
To normalize the extracted content, each chapter underwent a preprocessing step that included: i) expanding contractions (e.g., ``don’t'' to ``do not''), ii) removing stop words, punctuation marks, extra white spaces, and non-alphanumeric characters. However, hyphenated words like ``sea-sick'' were retained, as they are standard forms of bi-grams. Finally, each word was reduced to its lemma (the canonical or base form). Interjections or expressive sounds like ``woo-hoo'' or ``wa-hee'' were kept as they appear in the text.  
The final step of the initial processing involved obtaining word counts and, from those, generating the corresponding frequency-of-frequencies table for each chapter. The table shows the frequencies of word that appear in the chapter, along with the number of words that occur at each frequency. In \ref{app_poc} are shown the frequency of frequencies tables of Chapter 1 (\ref{tab:freq_chapter1}) and Chapter 135 (\ref{tab:freq_chapter135}) respectively. These tables are the base of the posterior statistical analysis.

After the preprocessing step, the analysis has been performed for each chapter separately, and it is configured as follows: first, we used Maximum Likelihood Estimation method  to determine the $\hat{\alpha}$ parameter estimation for the Zipf distribution. Next, for each frequency value, $i$, in the frequency of frequencies table, we estimate the $\hat{\lambda_i}$ parameter by equating the mean value of Zero-truncated Poisson distribution to the observed frequency in the chapter (method of moments). This is equivalent to solve the following equation:
\begin{equation*}
    n_i = \frac{\lambda_i}{1 - exp(-\lambda_i)},\,\,\, i=1\cdots m
\end{equation*}
where $m$ is the maximum frequency of a word in the chapter, and
$n_i$ stands for the number of words that have a frequency equal to $i$. Observe that all the words with the same frequency in the table will have the same $\lambda$ parameter estimation.

\noindent After this process,  a sequence of $\lambda$ values is obtained which, according with Theorem \ref{prop:teorema2}, should follow the distribution that appears in (\ref{eq:mixingzipf}). To verify this, the cumulative empirical distribution has been calculated for each chapter and compared with the theoretical cumulative distribution derived from the density function specified in (\ref{eq:mixingzipf}) with $\alpha$ equal to $\hat{\alpha}$. At that point, the Kolmogorov-Smirnov (KS) test has been performed using the \textbf{ks.test} function in the statistical software R. 

Table \ref{tab_results} in \ref{app_poc} provides, for all analyzed chapters, the chapter number along with the number of words, minimum and maximum frequencies, the count of non-zero frequencies, the $\hat{\alpha}$ estimate and its confidence interval, the KS statistic value, and the associated p-value.

Observe that $10$ out of the $135$ chapters have more that $1000$ words, being Chapter $54$ the most extensive with $1883$ as a total number of words. The smaller chapter is Chapter $120$ with only $71$ words and just $5$ different frequencies (from $1$ to $5$). The frequencies of words observed largely depend on the chapter under study. For instance, Chapters $16$ and $32$ have similar number of words, but the higher frequency of a word is equal to $49$ in Chapter $16$ and $142$ (nearly three times larger) in Chapter $32$. As a consequence of that, the frequency of frequencies tables are very different in dimension between the chapters. There are $6$ chapters with just $4$ different frequencies different from zero (Chapters 30, 38, 95, 97, 114 and 122) and a total of $14$ chapters, out of $135$, with a total number of frequencies different from zero smaller or equal to $5$. The larger table is the one associated to Chapter $54$ which has a total of $27$ different rows. Across the 135 chapters analyzed, the null hypothesis of the  KS tests is never rejected at a significance level of $0.05$, and it is rejected only once (Chapter 135) when the significance level is increased to $0.10$. In consequence, the data do not leads us to reject that the sequence of $\lambda$ values, for a given chapter, follows the distribution specified in Theorem \ref{prop:teorema2}.

The left-hand side of Figure \ref{fig:poc} displays, for Chapters $1$ and $135$, the values of the table of frequencies of frequencies in log-log scale, along with the fit obtained with a Zipf($\hat{\alpha}$) distribution. On the right-hand it appears the empirical cumulative probabilities alongside the theoretical cumulative probability distribution function of the distribution specified in (\ref{eq:mixingzipf}) with $\alpha=\hat \alpha$, for the same two chapters. Note that function (\ref{eq:mixingzipf})  is defined only for $\lambda >0$ and so it is its cumulative distribution function, however  $\lambda=0$ is an observed value since it is the $\lambda$ value that is associated to the words that appear once in the chapter. In both cases it is possible to see that both cumulative distribution functions are very similar. Similar behaviour is observed for the rest of the Chapters.

\begin{figure}[h!]
  \centering  
  \begin{minipage}{\linewidth}
    \centering
    \includegraphics[width=.40\linewidth, angle=-90]{./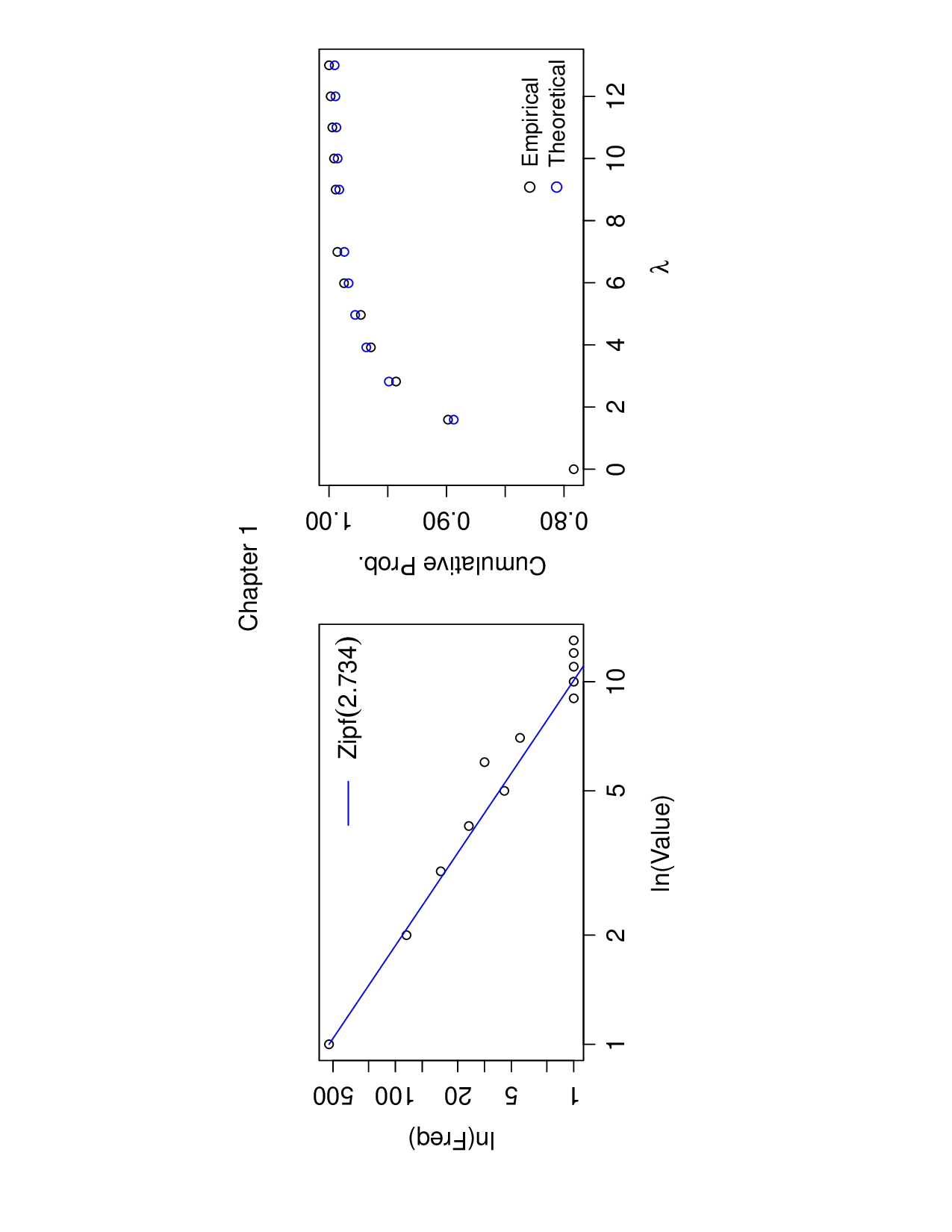}
  \end{minipage}
\vspace{0.5em} 
  \begin{minipage}{\linewidth}
    \centering
    \includegraphics[width=.40\linewidth, angle=-90]{./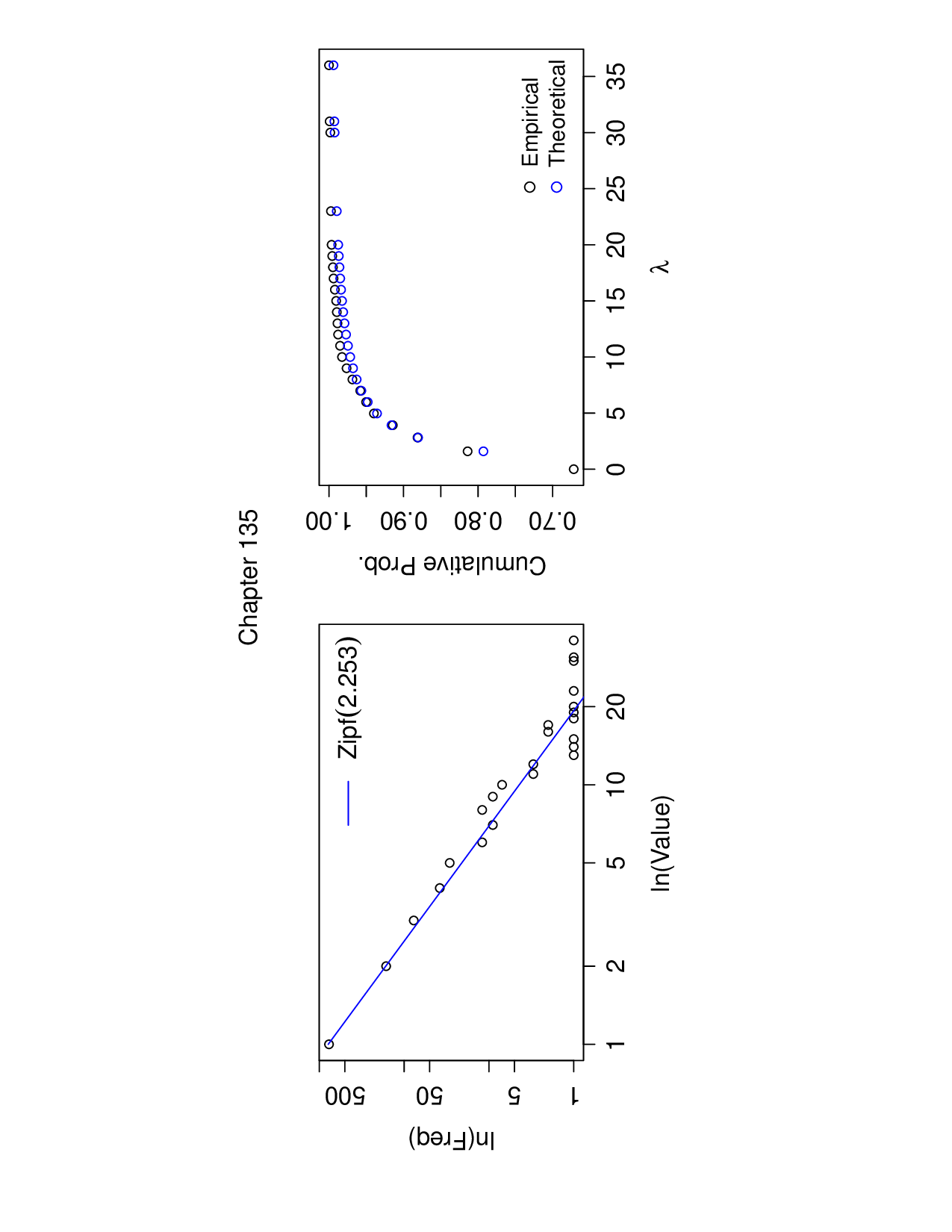}
  \end{minipage}
  \caption{On the left-hand side, the frequencies of frequencies are shown in log-log scale, along with the fit obtained using a Zipf($\hat{\alpha}$) distribution. On the right-hand side, the empirical cumulative probabilities are displayed, along with the theoretical cumulative probability distribution function for chapters 1 (top) and 135 (bottom).}
  \label{fig:poc}
\end{figure}

\section{Conclusions} \label{section_conclusions}
Understating the underling mechanisms that generate real data is always a valuable tool for understanding the nature of the data under analysis. 
In this work, we have analyzed the Zipf distribution through a mixture analysis, demonstrating that it can be obtained by generating data from a geometric distribution with a probability parameter that, instead of being constant, follows the probability distribution specified in Theorem \ref{prop:teo-geozipf}. Additionally, it can be obtained by generating data from a zero-truncated Poisson distribution, where the mean parameter varies from observation to observation according to the probability law specified in Theorem \ref{prop:teorema2}. It's essential to note that although it is an MZTP, it is not a ZTMP. Consequently, truncating a MP distribution at zero will never yield the Zipf distribution as a result. 

The Poisson scenario is checked through an extensive analysis of the $135$ chapters of the Moby Dick novel where, according to the KS test with an $\alpha=0.05$, all the chapters align with the result obtained in Theorem \ref{prop:teorema2}.

\section*{Declarations}
The authors have no conflicts of interest to declare that are relevant to the content of this article.

\section*{Funding}
Research partially supported by the projects AGAUR 2021 SGR 00613, UPC AGRUPS-2024 and PID2023-148158OB-I00 from the Spanish Ministry of Science Innovation and Universities.

\begin{appendices}
\section{Alternative Proofs to Theorem \ref{prop:teo-geozipf} and point a) of Theorem \ref{prop:teorema2}}
\label{app_proof}

In the following, the proofs of Theorem \ref{prop:teo-geozipf} and point a) of Theorem \ref{prop:teorema2} based on the PMF instead of the PGF are presented. As observed, the proof  of Theorem \ref{prop:teo-geozipf} is slightly shorter when using PMFs, while the proof of point a) of Theorem 2 is comparable in length to the one provided in Section \ref{sec_zipfContrib}.

\subsection{Proof of Theorem 1}
The PMF of the geometric distribution defined in $\{1,2,3,\cdots \}$ parameterized in terms of $s=-\log(1-p)$ is equal to $e^{-s (x-1)} \cdot (1-e^{-s})$. Thus, is is necessary to prove that:
\begin{equation} \label{eq:altproofT1}
\frac{x^{-\alpha}}{\zeta(\alpha)}=\int_0^{+\infty} e^{-s (x-1)} \cdot (1-e^{-s}) \cdot f(s;\alpha) ds
\end{equation}
where $f(s;\alpha)$ is defined in (\ref{eq:mixinggeozipf}). Substituting $f(s;\alpha)$ by his expression, the right-hand side of  (\ref{eq:altproofT1}) turns out to be equal to:
\begin{align*}
\int_0^{+\infty} e^{-s (x-1)} \cdot (1-e^{-s}) \cdot f(s;\alpha) ds&=
\int_0^{+\infty} e^{-s (x-1)} \cdot (1-e^{-s}) \cdot \frac{s^{\alpha-1}}{(e^s-1) \zeta(\alpha) \Gamma(\alpha)} ds
\\&=
\frac{1}{\zeta(\alpha)} \int_0^{+\infty} e^{-s x} \frac{s^{\alpha-1}}{\Gamma(\alpha)} ds,
\end{align*}
performing the change of variables $t= s x$, one has that:
$$
\int_0^{+\infty} e^{-s (x-1)} \cdot (1-e^{-s}) \cdot f(s;\alpha) ds=\frac{x^{-\alpha}}{\zeta(\alpha)} \int_0^{+\infty} \frac{ e^{-t} t^{\alpha-1}}{\Gamma(\alpha)} ds =\frac{x^{-\alpha}}{\zeta(\alpha)}
$$
$\hfill{\square}$

\subsection{Proof of point a) of  Theorem 2}
Taking into account that the PMF of the zero-truncated Poisson distribution with parameter $\lambda$ is equal to $e^{-\lambda} \lambda^x/((1-e^{-\lambda})\, x!)$,  proving the theorem is equivalent to seeing that
\begin{equation} \label{eq:altproofT2}
    \frac{x^{-\alpha}}{\zeta(\alpha)}=\int_0^{+\infty} \frac{e^{-\lambda} \lambda^x}{(1-e^{-\lambda})\, x!} \cdot f(\lambda; \alpha) d \lambda
\end{equation}

where $f(\lambda;\alpha)$ is defined in (\ref{eq:mixingzipf}). From Theorem (\ref{prop:teo-geozipf}) one has that:
\begin{equation} \label{eq:eqT2-1}
\frac{x^{-\alpha}}{\zeta(\alpha)}=\int_0^{+\infty} e^{-s (x-1)} \cdot (1-e^{-s}) \cdot \frac{s^{\alpha-1}}{(e^s-1) \zeta(\alpha) \Gamma(\alpha)} ds
\end{equation}
From Theorem 3 of \cite{valero2022zipf},  parametrizing  the geometric distribution with $s$, one has that:
\begin{equation} \label{eq:eqT2-2}
e^{-s (x-1)} \cdot (1-e^{-s})= \int_0^{+\infty} \frac{e^{-\lambda} \lambda^x}{(1-e^{-\lambda})\, x!} \cdot \frac{1-e^{-s}}{e^{-2 s}} e^{-\lambda e^s} (e^{\lambda}-1) d \lambda
\end{equation}
Substituting (\ref{eq:eqT2-2}) in (\ref{eq:eqT2-1}), and  taking into account that $e^{-\lambda} (e^{\lambda}-1)/(1-e^{-\lambda})=1$, one has that:
\begin{align*}
    \frac{x^{-\alpha}}{\zeta(\alpha)} & =  \int_0^{+\infty} \Big(   \int_0^{+\infty} \frac{ \lambda^x}{ x!} \cdot \frac{1-e^{-s}}{e^{-2 s}} e^{-\lambda e^s}  d \lambda    \Big) \cdot \frac{s^{\alpha-1}}{(e^s-1) \zeta(\alpha) \Gamma(\alpha)} ds \\ &=  \int_0^{+\infty} \frac{\lambda^x}{x!} \Big(\int_0^{+\infty} \frac{e^{-\lambda e^s+s} s^{\alpha-1}}{ \zeta(\alpha) \Gamma(\alpha)  } ds \Big) d \lambda \nonumber \\
&=   \int_0^{+\infty} \frac{ e^{-\lambda} \lambda^x}{(1-e^{-\lambda}) x!} \Big(\int_0^{+\infty} \frac{ (e^{\lambda}-1) e^{-\lambda e^s+s} s^{\alpha-1}}{ \zeta(\alpha) \Gamma(\alpha)  } ds \Big) d \lambda 
\end{align*}  

$\hfill{\square}$

\section{Frequency of frequencies tables of Chapters $1$ and $135$, the chapters used to illustrate the analysis in Section \ref{section_poc}}
\label{app_poc}
\begin{table}[htp!]
\label{tab:freq}
\centering  
\begin{minipage}{0.45\linewidth}  
\centering
\begin{tabular}{cc}
  \hline
  \textbf{Value} & \textbf{Freq} \\
  \hline
  1  & 555 \\
  2  & 75  \\
  3  & 31  \\
  4  & 15  \\
  5  & 6   \\
  6  & 10  \\
  7  & 4   \\
  9  & 1   \\
  10 & 1   \\
  11 & 1   \\
  12 & 1   \\
  13 & 1   \\
  \hline
  & 701\\
  \hline
\end{tabular}
\caption{Frequency of Frequencies table associated with Chapter 1.}
\label{tab:freq_chapter1}
\end{minipage}
\hspace{0.05\linewidth}  
\begin{minipage}{0.45\linewidth}  
\centering
\begin{tabular}{cc}
  \hline
  \textbf{Value} & \textbf{Freq} \\
  \hline
  1  & 769 \\
  2  & 163 \\
  3  & 77  \\
  4  & 38  \\
  5  & 29  \\
  6  & 12  \\
  7  & 9   \\
  8  & 12  \\
  9  & 9   \\
  10 & 7   \\
  11 & 3   \\
  12 & 3   \\
  13 & 1   \\
  14 & 1   \\
  15 & 1   \\
  16 & 2   \\
  17 & 2   \\
  18 & 1   \\
  19 & 1   \\
  20 & 1   \\
  23 & 1   \\
  30 & 1   \\
  31 & 1   \\
  36 & 1   \\
  \hline
& 1145 \\
\hline
\end{tabular}
\caption{Frequency of Frequencies table associated with Chapter 135.}
\label{tab:freq_chapter135}
\end{minipage}
\end{table}

Results of the analysis conducted in Section \ref{section_poc}. Table \ref{tab_results} summarizes the results obtained after analysing the 135 chapters included in the Novel Moby Dick by Herman Melville.

\begin{longtable}{ccccccccc}
\caption{Results obtained in the analysis. They include statistics such as the minimum and maximum frequencies, the number of different words, the number of words after preprocessing, the $\hat{\alpha}$ parameter of the Zipf distribution along with its confidence interval, the KS statistic, and the associated p-value.} \label{tab_results} \\

 \hline
  \textbf{Chp.} & \textbf{Min} & \textbf{Max} & \textbf{Nbr. of} & \textbf{Nbr. of} & $\boldsymbol{\hat{\alpha}}$ & $\boldsymbol{CI_{\alpha}}$ & \textbf{KS.} & \textbf{KS} \\
  \textbf{Nbr.} & \textbf{Freq} & \textbf{Freq} & \textbf{Diff Freq.} & \textbf{Words} & & & \textbf{Stat.} & \textbf{p-value} \\
  \hline
\endhead

\hline
\multicolumn{9}{r}{Continued on next page} \\
\hline
\endfoot

\hline
\endlastfoot

1 &   1 &  13 &  12 & 701 & 2.73 & (2.5846, 2.8826) & 0.33 & 0.54 \\ 
2 &   1 &   8 &   8 & 494 & 2.79 & (2.6074, 2.9774) & 0.38 & 0.66 \\ 
3 &   1 &  36 &  23 & 1379 & 2.30 & (2.2263, 2.3774) & 0.26 & 0.42 \\ 
4 &   1 &  13 &  10 & 535 & 2.70 & (2.5341, 2.8673) & 0.40 & 0.42 \\ 
5 &   1 &   6 &   6 & 291 & 3.10 & (2.8042, 3.3936) & 0.33 & 0.93 \\ 
6 &   1 &  11 &   8 & 332 & 3.09 & (2.8175, 3.3669) & 0.12 & 1.00 \\ 
7 &   1 &   6 &   6 & 383 & 3.15 & (2.8864, 3.4172) & 0.50 & 0.47 \\ 
8 &   1 &  12 &   8 & 359 & 2.93 & (2.6889, 3.164) & 0.25 & 0.98 \\ 
9 &   1 &  55 &  18 & 996 & 2.48 & (2.378, 2.5848) & 0.22 & 0.78 \\ 
10 &   1 &  10 &   8 & 501 & 2.65 & (2.4832, 2.8147) & 0.38 & 0.66 \\ 
11 &   1 &   5 &   5 & 257 & 3.20 & (2.8649, 3.5314) & 0.40 & 0.87 \\ 
12 &   1 &  10 &   6 & 328 & 2.98 & (2.7202, 3.234) & 0.33 & 0.93 \\ 
13 &   1 &  26 &  10 & 579 & 2.70 & (2.5405, 2.8607) & 0.40 & 0.42 \\ 
14 &   1 &  10 &   6 & 309 & 3.28 & (2.9624, 3.6012) & 0.17 & 1.00 \\ 
15 &   1 &  11 &  10 & 396 & 2.70 & (2.5092, 2.8971) & 0.20 & 0.99 \\ 
16 &   1 &  49 &  28 & 1280 & 2.29 & (2.2166, 2.3724) & 0.18 & 0.77 \\ 
17 &   1 &  26 &  12 & 647 & 2.57 & (2.434, 2.7092) & 0.33 & 0.54 \\ 
18 &   1 &  15 &  10 & 425 & 2.44 & (2.29, 2.5969) & 0.30 & 0.79 \\ 
19 &   1 &  21 &  11 & 299 & 2.31 & (2.1443, 2.4703) & 0.36 & 0.48 \\ 
20 &   1 &   9 &   8 & 313 & 2.77 & (2.538, 2.9943) & 0.25 & 0.98 \\ 
21 &   1 &  17 &  11 & 349 & 2.70 & (2.4958, 2.9087) & 0.18 & 1.00 \\ 
22 &   1 &  19 &  14 & 520 & 2.56 & (2.4053, 2.709) & 0.21 & 0.92 \\ 
23 &   1 &   5 &   5 & 166 & 3.31 & (2.8682, 3.7552) & 0.20 & 1.00 \\ 
24 &   1 &  17 &  11 & 558 & 2.76 & (2.5919, 2.9327) & 0.18 & 1.00 \\ 
25 &   1 &   9 &   5 & 110 & 3.11 & (2.6277, 3.5932) & 0.20 & 1.00 \\ 
26 &   1 &  11 &   9 & 459 & 3.07 & (2.8416, 3.3032) & 0.22 & 0.99 \\ 
27 &   1 &   9 &   9 & 592 & 2.83 & (2.6579, 3.005) & 0.33 & 0.73 \\ 
28 &   1 &  11 &   8 & 502 & 2.79 & (2.608, 2.9747) & 0.38 & 0.66 \\ 
29 &   1 &  11 &   9 & 399 & 2.87 & (2.6527, 3.0867) & 0.22 & 0.99 \\ 
30 &   1 &   7 &   4 & 124 & 3.64 & (3.0209, 4.2522) & 0.25 & 1.00 \\ 
31 &   1 &  12 &  11 & 209 & 2.33 & (2.1278, 2.5241) & 0.18 & 1.00 \\ 
32 &   1 & 142 &  22 & 1262 & 2.31 & (2.2327, 2.3921) & 0.32 & 0.22 \\ 
33 &   1 &   8 &   7 & 368 & 2.98 & (2.7369, 3.2229) & 0.29 & 0.96 \\ 
34 &   1 &  18 &  12 & 726 & 2.81 & (2.6559, 2.9648) & 0.25 & 0.87 \\ 
35 &   1 &  13 &  12 & 837 & 2.68 & (2.5498, 2.8124) & 0.33 & 0.54 \\ 
36 &   1 &  51 &  18 & 803 & 2.41 & (2.3015, 2.5188) & 0.22 & 0.78 \\ 
37 &   1 &  14 &   6 & 211 & 3.31 & (2.9164, 3.702) & 0.17 & 1.00 \\ 
38 &   1 &   4 &   4 & 164 & 3.55 & (3.0361, 4.0551) & 0.25 & 1.00 \\ 
39 &   1 &   6 &   6 & 106 & 3.27 & (2.7317, 3.818) & 0.17 & 1.00 \\ 
40 &   1 &  33 &  12 & 523 & 2.50 & (2.3587, 2.6492) & 0.33 & 0.54 \\ 
41 &   1 &  45 &  16 & 1149 & 2.57 & (2.4696, 2.6763) & 0.19 & 0.95 \\ 
42 &   1 &  38 &  16 & 1155 & 2.63 & (2.5202, 2.7352) & 0.19 & 0.95 \\ 
43 &   1 &   8 &   5 & 107 & 2.92 & (2.4873, 3.3543) & 0.20 & 1.00 \\ 
44 &   1 &  17 &  15 & 654 & 2.68 & (2.5297, 2.826) & 0.20 & 0.94 \\ 
45 &   1 &  46 &  16 & 994 & 2.51 & (2.4081, 2.6205) & 0.25 & 0.72 \\ 
46 &   1 &  13 &   8 & 377 & 2.93 & (2.6959, 3.1601) & 0.25 & 0.98 \\ 
47 &   1 &   6 &   6 & 334 & 2.86 & (2.621, 3.0909) & 0.33 & 0.93 \\ 
48 &   1 &  47 &  19 & 1105 & 2.38 & (2.2947, 2.4761) & 0.21 & 0.81 \\ 
49 &   1 &   8 &   8 & 310 & 3.01 & (2.7389, 3.2783) & 0.25 & 0.98 \\ 
50 &   1 &  12 &   8 & 353 & 2.88 & (2.6489, 3.1139) & 0.25 & 0.98 \\ 
51 &   1 &  11 &  11 & 536 & 2.77 & (2.5973, 2.9475) & 0.27 & 0.83 \\ 
52 &   1 &   6 &   5 & 282 & 3.09 & (2.7901, 3.3845) & 0.40 & 0.87 \\ 
53 &   1 &  11 &  10 & 491 & 2.62 & (2.456, 2.7837) & 0.30 & 0.79 \\ 
54 &   1 &  40 &  27 & 1883 & 2.30 & (2.2392, 2.3687) & 0.26 & 0.33 \\ 
55 &   1 &  43 &  12 & 604 & 2.70 & (2.5409, 2.8537) & 0.25 & 0.87 \\ 
56 &   1 &  25 &   9 & 457 & 2.82 & (2.623, 3.0147) & 0.11 & 1.00 \\ 
57 &   1 &  15 &   6 & 373 & 3.37 & (3.0664, 3.6795) & 0.17 & 1.00 \\ 
58 &   1 &  15 &   8 & 387 & 3.05 & (2.8026, 3.2983) & 0.25 & 0.98 \\ 
59 &   1 &   8 &   6 & 361 & 3.06 & (2.7987, 3.3139) & 0.33 & 0.93 \\ 
60 &   1 &  18 &  11 & 479 & 2.79 & (2.6006, 2.9751) & 0.18 & 1.00 \\ 
61 &   1 &  16 &  12 & 673 & 2.69 & (2.5435, 2.8384) & 0.25 & 0.87 \\ 
62 &   1 &   8 &   6 & 196 & 3.10 & (2.7412, 3.4602) & 0.17 & 1.00 \\ 
63 &   1 &   6 &   6 & 182 & 3.08 & (2.7136, 3.4512) & 0.17 & 1.00 \\ 
64 &   1 &  40 &  19 & 801 & 2.39 & (2.286, 2.5005) & 0.21 & 0.81 \\ 
65 &   1 &  11 &   7 & 335 & 2.78 & (2.5551, 2.9996) & 0.43 & 0.58 \\ 
66 &   1 &   9 &   6 & 255 & 3.28 & (2.93, 3.6332) & 0.17 & 1.00 \\ 
67 &   1 &   7 &   7 & 272 & 2.84 & (2.5848, 3.101) & 0.29 & 0.96 \\ 
68 &   1 &  26 &   9 & 387 & 2.79 & (2.5811, 2.9982) & 0.33 & 0.73 \\ 
69 &   1 &   6 &   5 & 189 & 3.44 & (2.9941, 3.89) & 0.20 & 1.00 \\ 
70 &   1 &  11 &   7 & 355 & 3.05 & (2.7896, 3.3064) & 0.14 & 1.00 \\ 
71 &   1 &  19 &  14 & 701 & 2.58 & (2.4422, 2.7072) & 0.21 & 0.92 \\ 
72 &   1 &  16 &  12 & 520 & 2.75 & (2.5769, 2.9274) & 0.17 & 1.00 \\ 
73 &   1 &  23 &  15 & 559 & 2.42 & (2.2891, 2.5516) & 0.33 & 0.39 \\ 
74 &   1 &  26 &  13 & 482 & 2.66 & (2.4913, 2.8325) & 0.08 & 1.00 \\ 
75 &   1 &  23 &  10 & 398 & 2.71 & (2.5154, 2.9042) & 0.20 & 0.99 \\ 
76 &   1 &  11 &   8 & 325 & 3.31 & (2.9931, 3.6264) & 0.25 & 0.98 \\ 
77 &   1 &  11 &   7 & 265 & 3.09 & (2.781, 3.3942) & 0.14 & 1.00 \\ 
78 &   1 &  18 &  11 & 539 & 2.68 & (2.5205, 2.8486) & 0.27 & 0.83 \\ 
79 &   1 &  12 &   8 & 340 & 2.97 & (2.7201, 3.223) & 0.12 & 1.00 \\ 
80 &   1 &  11 &   8 & 303 & 2.79 & (2.5507, 3.0209) & 0.12 & 1.00 \\ 
81 &   1 &  41 &  18 & 1197 & 2.42 & (2.3269, 2.5058) & 0.39 & 0.13 \\ 
82 &   1 &  16 &   8 & 384 & 2.80 & (2.5871, 3.0082) & 0.25 & 0.98 \\ 
83 &   1 &  16 &   8 & 281 & 2.94 & (2.6687, 3.2104) & 0.12 & 1.00 \\ 
84 &   1 &   9 &   7 & 354 & 3.36 & (3.0478, 3.6725) & 0.14 & 1.00 \\ 
85 &   1 &  23 &  13 & 575 & 2.53 & (2.3909, 2.6742) & 0.23 & 0.90 \\ 
86 &   1 &  21 &  11 & 616 & 2.80 & (2.6308, 2.9631) & 0.18 & 1.00 \\ 
87 &   1 &  57 &  21 & 1385 & 2.45 & (2.3611, 2.5315) & 0.29 & 0.36 \\ 
88 &   1 &  12 &   9 & 438 & 2.88 & (2.6702, 3.0869) & 0.11 & 1.00 \\ 
89 &   1 &  11 &   9 & 440 & 2.78 & (2.5832, 2.971) & 0.33 & 0.73 \\ 
90 &   1 &  11 &   8 & 348 & 2.78 & (2.5602, 2.9966) & 0.25 & 0.98 \\ 
91 &   1 &  33 &  12 & 710 & 2.52 & (2.3979, 2.6513) & 0.25 & 0.87 \\ 
92 &   1 &  10 &   7 & 367 & 3.09 & (2.83, 3.3523) & 0.29 & 0.96 \\ 
93 &   1 &  26 &  12 & 572 & 2.87 & (2.6892, 3.0519) & 0.08 & 1.00 \\ 
94 &   1 &  13 &   8 & 469 & 3.09 & (2.8594, 3.3212) & 0.12 & 1.00 \\ 
95 &   1 &   4 &   4 & 218 & 3.98 & (3.4266, 4.538) & 0.25 & 1.00 \\ 
96 &   1 &  11 &   9 & 634 & 2.81 & (2.6481, 2.9796) & 0.44 & 0.35 \\ 
97 &   1 &   4 &   4 & 106 & 4.02 & (3.2066, 4.8291) & 0.25 & 1.00 \\ 
98 &   1 &   7 &   7 & 389 & 3.04 & (2.7961, 3.2879) & 0.29 & 0.96 \\ 
99 &   1 &  23 &  15 & 713 & 2.53 & (2.4043, 2.6585) & 0.13 & 1.00 \\ 
100 &   1 &  25 &  16 & 739 & 2.43 & (2.316, 2.5464) & 0.19 & 0.95 \\ 
101 &   1 &  12 &  11 & 554 & 2.60 & (2.4483, 2.7523) & 0.36 & 0.48 \\ 
102 &   1 &  13 &   9 & 575 & 2.94 & (2.7496, 3.1281) & 0.22 & 0.99 \\ 
103 &   1 &  14 &  10 & 312 & 2.69 & (2.4737, 2.9066) & 0.20 & 0.99 \\ 
104 &   1 &  18 &  10 & 535 & 3.10 & (2.8849, 3.3206) & 0.10 & 1.00 \\ 
105 &   1 &  28 &   9 & 525 & 2.77 & (2.5927, 2.9458) & 0.22 & 0.99 \\ 
106 &   1 &  10 &   6 & 341 & 3.11 & (2.8394, 3.389) & 0.33 & 0.93 \\ 
107 &   1 &  16 &   7 & 399 & 3.35 & (3.057, 3.6419) & 0.14 & 1.00 \\ 
108 &   1 &  24 &  15 & 440 & 2.39 & (2.2429, 2.5307) & 0.20 & 0.94 \\ 
109 &   1 &  13 &   9 & 315 & 2.85 & (2.608, 3.0896) & 0.22 & 0.99 \\ 
110 &   1 &  23 &  12 & 690 & 2.58 & (2.448, 2.7167) & 0.25 & 0.87 \\ 
111 &   1 &   5 &   5 & 201 & 3.49 & (3.0435, 3.9358) & 0.20 & 1.00 \\ 
112 &   1 &   8 &   7 & 388 & 3.23 & (2.9498, 3.5012) & 0.29 & 0.96 \\ 
113 &   1 &  16 &  12 & 396 & 2.56 & (2.3888, 2.7385) & 0.17 & 1.00 \\ 
114 &   1 &   4 &   4 & 289 & 3.37 & (3.0246, 3.7212) & 0.25 & 1.00 \\ 
115 &   1 &  12 &   8 & 332 & 3.02 & (2.7618, 3.2885) & 0.25 & 0.98 \\ 
116 &   1 &   7 &   7 & 222 & 3.10 & (2.7606, 3.4349) & 0.29 & 0.96 \\ 
117 &   1 &   7 &   6 & 176 & 2.67 & (2.3872, 2.9559) & 0.50 & 0.47 \\ 
118 &   1 &  15 &   9 & 327 & 2.77 & (2.5436, 2.9903) & 0.22 & 0.99 \\ 
119 &   1 &  27 &  16 & 755 & 2.44 & (2.3285, 2.5588) & 0.31 & 0.43 \\ 
120 &   1 &   5 &   5 &  71 & 2.94 & (2.3979, 3.4724) & 0.40 & 0.87 \\ 
121 &   1 &   8 &   7 & 208 & 2.77 & (2.4876, 3.048) & 0.43 & 0.58 \\ 
122 &   1 &   9 &   4 &  21 & 2.63 & (1.832, 3.4294) & 0.25 & 1.00 \\ 
123 &   1 &  10 &   9 & 416 & 2.67 & (2.4898, 2.8606) & 0.22 & 0.99 \\ 
124 &   1 &  12 &  11 & 438 & 2.79 & (2.5902, 2.9813) & 0.27 & 0.83 \\ 
125 &   1 &  15 &  12 & 378 & 2.63 & (2.4385, 2.8137) & 0.25 & 0.87 \\ 
126 &   1 &  10 &   9 & 421 & 2.65 & (2.4712, 2.8338) & 0.33 & 0.73 \\ 
127 &   1 &  14 &   9 & 221 & 2.61 & (2.3645, 2.8481) & 0.22 & 0.99 \\ 
128 &   1 &  15 &  11 & 450 & 2.62 & (2.4447, 2.7859) & 0.27 & 0.83 \\ 
129 &   1 &  10 &   9 & 197 & 2.54 & (2.2982, 2.7857) & 0.22 & 0.99 \\ 
130 &   1 &  18 &  13 & 561 & 2.63 & (2.4759, 2.7849) & 0.23 & 0.90 \\ 
131 &   1 &   5 &   5 & 188 & 3.16 & (2.7814, 3.544) & 0.40 & 0.87 \\ 
132 &   1 &  14 &  12 & 537 & 2.65 & (2.4893, 2.8096) & 0.17 & 1.00 \\ 
133 &   1 &  29 &  18 & 1023 & 2.40 & (2.3039, 2.4946) & 0.28 & 0.50 \\ 
134 &   1 &  27 &  17 & 997 & 2.48 & (2.3809, 2.5881) & 0.24 & 0.75 \\ 
135 &   1 &  36 &  24 & 1145 & 2.25 & (2.174, 2.3327) & 0.38 & 0.07 \\ 
\end{longtable}
\end{appendices}


\bibliographystyle{apalike}
\bibliography{./ref}

@article{lee2024degree,
  title={Degree distributions in networks: {B}eyond the power law},
  author={Lee, Clement and Eastoe, Emma F and Farrell, Aiden},
  journal={Statistica Neerlandica},
  year={2024},
  publisher={Wiley Online Library}
}

@article{wang2023zipf,
  title={A {Z}ipf's law-based text generation approach for addressing imbalance in entity extraction},
  author={Wang, Zhenhua and Ren, Ming and Gao, Dong and Li, Zhuang},
  journal={Journal of Informetrics},
  volume={17},
  number={4},
  pages={101453},
  year={2023},
  publisher={Elsevier}
}

@article{perc2010zipf,
  title={Zipf’s law and log-normal distributions in measures of scientific output across fields and institutions: 40 years of Slovenia’s research as an example},
  author={Perc, Matja{\v{z}}},
  journal={Journal of Informetrics},
  volume={4},
  number={3},
  pages={358--364},
  year={2010},
  publisher={Elsevier}
}

@article{luo2024segmental,
  title={Segmental estimation and testing method for power-law distributions and some extensions},
  author={Luo, Xinyi},
  journal={Physica A: Statistical Mechanics and its Applications},
  volume={640},
  pages={129695},
  year={2024},
  publisher={Elsevier}
}

@article{espitia2020universal,
  title={Universal and non-universal text statistics: {C}lustering coefficient for language identification},
  author={Espitia, Diego and Larralde, Hern{\'a}n},
  journal={Physica A: Statistical Mechanics and its Applications},
  volume={553},
  pages={123905},
  year={2020},
  publisher={Elsevier}
}

@article{seibel2024whaleness,
  title={The {W}haleness of the {W}hale: {I}nterspecies {R}elationality in {M}oby-{D}ick and {I}n {T}he {H}eart of the {S}ea},
  author={Seibel, Svetlana},
  journal={European journal of American studies},
  volume={19},
  number={19-1},
  year={2024},
  publisher={European Association for American Studies}
}

@article{asif2021statistical,
  title={A statistical evidence of power law distribution in the upper tail of world billionaires’ data 2010--20},
  author={Asif, Muhammad and Hussain, Zawar and Asghar, Zahid and Hussain, Muhammad Irfan and Raftab, Mariya and Shah, Said Farooq and Khan, Akbar Ali},
  journal={Physica A: Statistical Mechanics and Its Applications},
  volume={581},
  pages={126198},
  year={2021},
  publisher={Elsevier}
}

@article{chen2021exploring,
  title={Exploring the level of urbanization based on Zipf’s scaling exponent},
  author={Chen, Yanguang},
  journal={Physica A: Statistical Mechanics and Its Applications},
  volume={566},
  pages={125620},
  year={2021},
  publisher={Elsevier}
}

@article{chacoma2021word,
  title={Word frequency--rank relationship in tagged texts},
  author={Chacoma, Andr{\'e}s and Zanette, Dami{\'a}n H},
  journal={Physica A: Statistical Mechanics and its Applications},
  volume={574},
  pages={126020},
  year={2021},
  publisher={Elsevier}
}

@article{hill1975stronger,
	title={Stronger forms of {Z}ipf's law},
	author={Hill, Bruce M and Woodroofe, Michael},
	journal={Journal of the American Statistical Association},
	volume={70},
	number={349},
	pages={212--219},
	year={1975},
	publisher={Taylor \& Francis Group}
}

@article{valero2010zero,
  title={On zero-truncating and mixing {P}oisson distributions},
  author={Valero, J and P{\'e}rez-Casany, M and Ginebra, J},
  journal={Advances in Applied Probability},
  volume={42},
  number={4},
  pages={1013--1027},
  year={2010},
  publisher={Cambridge University Press}
}

@article{wang2017zipf,
  title={Zipf’s law in passwords},
  author={Wang, Ding and Cheng, Haibo and Wang, Ping and Huang, Xinyi and Jian, Gaopeng},
  journal={IEEE Transactions on Information Forensics and Security},
  volume={12},
  number={11},
  pages={2776--2791},
  year={2017},
  publisher={IEEE}
}

@article{ectors2018exploratory,
  title={Exploratory analysis of {Z}ipf’s universal power law in activity schedules},
  author={Ectors, Wim and Kochan, Bruno and Janssens, Davy and Bellemans, Tom and Wets, Geert},
  journal={Transportation},
  pages={1--24},
  year={2018},
  publisher={Springer}
}

@article{visser2013zipf,
  title={Zipf's law, power laws and maximum entropy},
  author={Visser, Matt},
  journal={New Journal of Physics},
  volume={15},
  number={4},
  pages={043021},
  year={2013},
  publisher={IOP Publishing}
}

@misc{floudas2009pardalos,
  title={Pardalos, et {P}anos. {E}ncyclopedia of {O}ptimization},
  author={Floudas, Christodoulos},
  year={2009},
  publisher={Springer}
}

@book {johnson2005univariate,
    AUTHOR = {Johnson, Norman L. and Kemp, Adrienne W. and Kotz, Samuel},
     TITLE = {Univariate discrete distributions},
    SERIES = {Wiley Series in Probability and Statistics},
   EDITION = {Third},
 PUBLISHER = {Wiley-Interscience [John Wiley \& Sons], Hoboken, NJ},
      YEAR = {2005},
     PAGES = {xx+646},
      ISBN = {978-0-471-27246-5; 0-471-27246-9},
   MRCLASS = {62-02 (60C05 62E10)},
  MRNUMBER = {2163227},
MRREVIEWER = {Anant P. Godbole},
       DOI = {10.1002/0471715816},
       URL = {https://doi.org/10.1002/0471715816}
}

@article{duarte2020zipf,
  title={The {Z}ipf--{P}oisson-stopped-sum distribution with an application for modeling the degree sequence of social networks},
  author={Duarte-L{\'o}pez, Ariel and P{\'e}rez-Casany, Marta and Valero, Jordi},
  journal={Computational Statistics \& Data Analysis},
  volume={143},
  pages={106838},
  year={2020},
  publisher={Elsevier}
}

@article{valero2013poisson,
  title={On Poisson-stopped-sums that are mixed Poisson},
  author={Valero, Jordi and P{\'e}rez-Casany, Marta and Ginebra, Josep},
  journal={Statistics \& Probability Letters},
  volume={83},
  number={8},
  pages={1830--1834},
  year={2013},
  publisher={Elsevier}
}

@article{lifshts2024intersections,
  title={Intersections of Zipf random sets: Maximal weighted relevance},
  author={Lifshts, MA and Lialinov, IM},
  journal={Statistics \& Probability Letters},
  pages={110050},
  year={2024},
  publisher={Elsevier}
}

@article{lifshits2023probabilistic,
  title={Probabilistic properties of Zipf sets and their maximal intersections},
  author={Lifshits, Mikhail Anatolievich and Lialinov, Ivan Mikhailovich},
  journal={Theory of Probability \& Its Applications},
  volume={68},
  number={1},
  pages={16--30},
  year={2023},
  publisher={SIAM}
}

@article{lifshits2022two,
  title={Two limit theorems on the intersections of random Zipf sets},
  author={Lifshits, Mikhail Anatolievich and Lialinov, Ivan Mikhailovich},
  journal={Zapiski Nauchnykh Seminarov POMI},
  volume={510},
  pages={165--171},
  year={2022},
  publisher={St. Petersburg Department of Steklov Institute of Mathematics, Russian~…}
}

@article{valero2022zipf,
  title={The {Z}ipf-{P}olylog distribution: {M}odeling human interactions through social networks},
  author={Valero, Jordi and P{\'e}rez-Casany, Marta and Duarte-L{\'o}pez, Ariel},
  journal={Physica A: Statistical Mechanics and its Applications},
  volume={603},
  pages={127680},
  year={2022},
  publisher={Elsevier}
}

\end{document}